\newcommand{\Z}{{{\mathbb{Z}}}}
\newcommand{\R}{{{\mathbb{R}}}}
\newcommand{\cL}{{{\mathcal{L}}}}
\newcommand{\cF}{{{\mathcal{F}}}}
\newcommand{\cC}{{{\mathcal{C}}}}
\newcommand{\MCG}{{\operatorname{MCG}}}
\newcommand{\ind}[1]{{^{{(#1)}}}}
\newcommand{\indp}[1]{\ind{#1}\vphantom{\cL}'}
\newtheorem{thm}{Theorem}
\newtheorem{lem}[thm]{Lemma}
\theoremstyle{definition}		
\newtheorem{defn}[thm]{Definition}
\newtheorem{alg}[thm]{Algorithm}
\newtheorem{notn}[thm]{Notation}
\theoremstyle{remark}
\newtheorem{remark}[thm]{Remark}
    \newcommand*{\algrule}[1][\algorithmicindent]{\makebox[#1][l]{\hspace*{.5em}\thealgruleextra\vrule height \thealgruleheight depth \thealgruledepth}}%
\newcommand*{\thealgruleextra}{}
\newcommand*{\thealgruleheight}{.75\baselineskip}
\newcommand*{\thealgruledepth}{.25\baselineskip}
\def\ALG@printindent{%
    \ifnum \theALG@nested>0% is there anything to print
        \ifx\ALG@text\ALG@x@notext% is this an end group without any text?
            % do nothing
        \else
            \unskip
            \addvspace{-5.5 pt}% FUDGE to make the rules line up
            % draw a rule for each indent level
            \ALG@printindent@tempcnta=1
            \loop
                \algrule[\csname ALG@ind@\the\ALG@printindent@tempcnta\endcsname]%
                \advance \ALG@printindent@tempcnta 1
            \ifnum \ALG@printindent@tempcnta<\numexpr\theALG@nested+1\relax% can't do <=, so add one to RHS and use < instead
            \repeat
        \fi
    \fi
    }%
\patchcmd{\ALG@doentity}{\noindent\hskip\ALG@tlm}{\ALG@printindent}{}{\errmessage{failed to patch}}
\newbox\statebox
\newcommand{\myState}[1]{%
    \setbox\statebox=\vbox{#1}%
    \edef\thealgruleheight{\dimexpr \the\ht\statebox+1pt\relax}%
    \edef\thealgruledepth{\dimexpr \the\dp\statebox+1pt\relax}%
    \ifdim\thealgruleheight<.75\baselineskip
        \def\thealgruleheight{\dimexpr .75\baselineskip+1pt\relax}%
    \fi
    \ifdim\thealgruledepth<.25\baselineskip
        \def\thealgruledepth{\dimexpr .25\baselineskip+1pt\relax}%
    \fi
    %\showboxdepth=100
    %\showboxbreadth=100
    %\showbox\statebox
    \State #1%
    %\State \usebox\statebox
    %\State \unvbox\statebox
    %reset in case the next command is not wrapped in \myState
    \def\thealgruleheight{\dimexpr .75\baselineskip+1pt\relax}%
    \def\thealgruledepth{\dimexpr .25\baselineskip+1pt\relax}%
}
\title{Intersections of multicurves from Dynnikov coordinates}
\author{S. \"Oyk\"u Yurtta\c s\footnote{Dicle University, Science Faculty,
    Mathematics Department, 21280, Diyarbak\i r, Turkey}\ ~and Toby
    Hall\footnote{Department of Mathematical Sciences, University of Liverpool,
    Liverpool L69 7ZL, UK}}
\date{November 2017}
\begin{document}
\maketitle

\begin{abstract} We present an algorithm for calculating the geometric
  intersection number of two multicurves on the $n$-punctured disk, taking as
  input their Dynnikov coordinates. The algorithm has complexity $O(m^2n^4)$,
  where~$m$ is the sum of the absolute values of the Dynnikov coordinates of
  the two multicurves. The main ingredient is an algorithm due to
  Cumplido for relaxing a multicurve.
\end{abstract}

\section{Introduction} 

Determining the geometric intersection number of two simple closed curves, or
of two multicurves (also known as \emph{integral laminations}), on a
surface~$S$ is a fundamental problem in computational topology. Algorithms such
as those of Bell and Webb~\cite{Bell-Webb} and Schaefer, Sedgwick, and
\v{S}tefankovi\v{c}~\cite{SSS} take as input the \emph{normal coordinates} of
the multicurves: vectors of minimal intersection numbers with the edges of an
ideal triangulation of~$S$. They compute the geometric intersection number of
two multicurves with complexity polynomial in the Euler characteristic of~$S$
and in $\log M$, where~$M$ is the sum of the normal coordinates. 

In this paper we restrict to the case where~$S=D_n$ is an $n$-punctured disk.
In this setting, multicurves are beautifully described by their \emph{Dynnikov
coordinates}~\cite{D02}: a collection of $2n-4$ linear combinations of
intersection numbers with the $3n-5$ edges of a near-triangulation, which
provide a bijection between the set of multicurves on~$D_n$ and $\Z^{2n-4}$. We
describe an algorithm for calculating the geometric intersection number of two
multicurves on~$D_n$ whose complexity is polynomial in~$n$ and in~$m$, the sum
of the absolute values of the coordinates. The advantages of this algorithm are
that it works directly with Dynnikov coordinates, and that it is
straightforward to express and to code.

The main ingredient is an algorithm due to Cumplido~\cite{Cumplido} which {\em
relaxes} a multicurve~$\cL$: that is, it finds a mapping class on $D_n$
(expressed as a positive braid) which sends~$\cL$ to a multicurve each of whose
components only intersects the horizontal diameter of the disk twice. Since the
geometric intersection number is invariant under the action of the mapping
class group, it only remains to provide an algorithm to calculate the geometric
intersection number of an arbitrary multicurve with a relaxed one.

Section~\ref{sec:ildc} is a brief introduction to multicurves, Dynnikov
coordinates, and the \emph{update rules} which describe the action of the braid
group~$B_n$ on Dynnikov coordinates. In Section~\ref{sec:int-num-elementary},
we derive a formula for calculating the geometric intersection number with a
relaxed multicurve (this is a corrected version of a formula described in
Theorem~11 of~\cite{paper2}). Cumplido's algorithm is stated in
Section~\ref{sec:cumplido}, and in Section~\ref{sec:the-alg} we state our
algorithm and analyse its complexity.

We work throughout with \emph{extended} Dynnikov coordinates in $\Z^{2n}$,
obtained by adjoining~4 redundant coordinates to the standard Dynnikov ones,
which brings computational and notational advantages. For the sake of
brevity, we refer to these extended coordinates simply as Dynnikov coordinates,
and to the usual Dynnikov coordinates as \emph{reduced}.
Remark~\ref{rmk:reduced-coords} provides formul\ae\ for translating between the
two types of coordinates.

\section{Multicurves and Dynnikov coordinates}
\label{sec:ildc}
\subsection{Multicurves on the punctured disk} Let $n\ge 3$, and $D_n$
be a standard model of the $n$-punctured disk in the plane, with the punctures
arranged along the horizontal diameter (henceforth referred to simply as
\emph{the diameter}). A simple closed curve in $D_n$ is {\em inessential} if it
bounds an unpunctured disk, a once-punctured disk, or an $n$-punctured disk,
and is {\em essential} otherwise.

A {\em multicurve} $\cL$ in $D_n$ is a finite union of pairwise
disjoint unoriented essential simple closed curves in $D_n$, up to isotopy
(that is, $\cL$ is the isotopy class of such a union of simple closed curves).
We write $\cL_n$ for the set of multicurves on $D_n$ (including the empty multicurve).

Given two multicurves~$\cL\ind{1},\cL\ind{2}\in\cL_n$ we write
\[
  \iota(\cL\ind{1}, \cL\ind{2}) =
    \min\{\# L\ind{1}\cap L\ind{2}\,:\, L\ind{1}\in\cL\ind{1} \text{ and }
    L\ind{2}\in\cL\ind{2}\},
\] the \emph{geometric intersection number} of $\cL\ind{1}$ and $\cL\ind{2}$.
The aim of this paper is to describe an algorithm for calculating
$\iota(\cL\ind{1}, \cL\ind{2})$ from the {\em Dynnikov coordinates} of
$\cL\ind{1}$ and~$\cL\ind{2}$.

We will regard~$n$ as being fixed throughout, and suppress the dependence of
some objects upon it.

\subsection{The Dynnikov coordinate system} The Dynnikov coordinate
system~\cite{D02} provides, for each $n\ge 3$, a bijection \mbox{$\rho_r\colon
\cL_n\to \Z^{2n-4}$}, which we now define (see Remark~\ref{rmk:reduced-coords}
below).

Construct {\em Dynnikov arcs} $\alpha_i$ ($-1\le i\le 2n-2$) and $\beta_i$
($0\le i\le n$) in $D_n$ as depicted in Figure~\ref{fig:dynn-arcs}. (The
unconventional indexing starting with $i=-1$ is to maintain consistency with
reduced Dynnikov coordinates, where the arcs $\alpha_{-1}$, $\alpha_0$,
$\alpha_{2n-3}$, $\alpha_{2n-2}$, $\beta_0$, and $\beta_n$ are not used.) Given
$\cL\in\cL_n$, let $L$ be a representative of $\cL$ which intersects each of
these arcs minimally (such an~$L$ is called a {\em minimal representative} of
$\cL$). Write $\alpha_i$ (respectively $\beta_i$) for the number of
intersections of $L$ with the arc $\alpha_i$ (respectively the arc $\beta_i$).
This overload of notation will not give rise to any ambiguity, since it will
always be stated explicitly when the symbols $\alpha_i$ and $\beta_i$ refer to
arcs rather than to integers. We write $(\alpha\,;\,\beta) =
(\alpha_{-1},\ldots,\alpha_{2n-2}\,;\,\beta_0,\ldots,\beta_n)$ for the
collection of intersection numbers associated to~$\cL$.

\begin{figure}[htbp]
\begin{center}
  \includegraphics[width=0.9\textwidth]{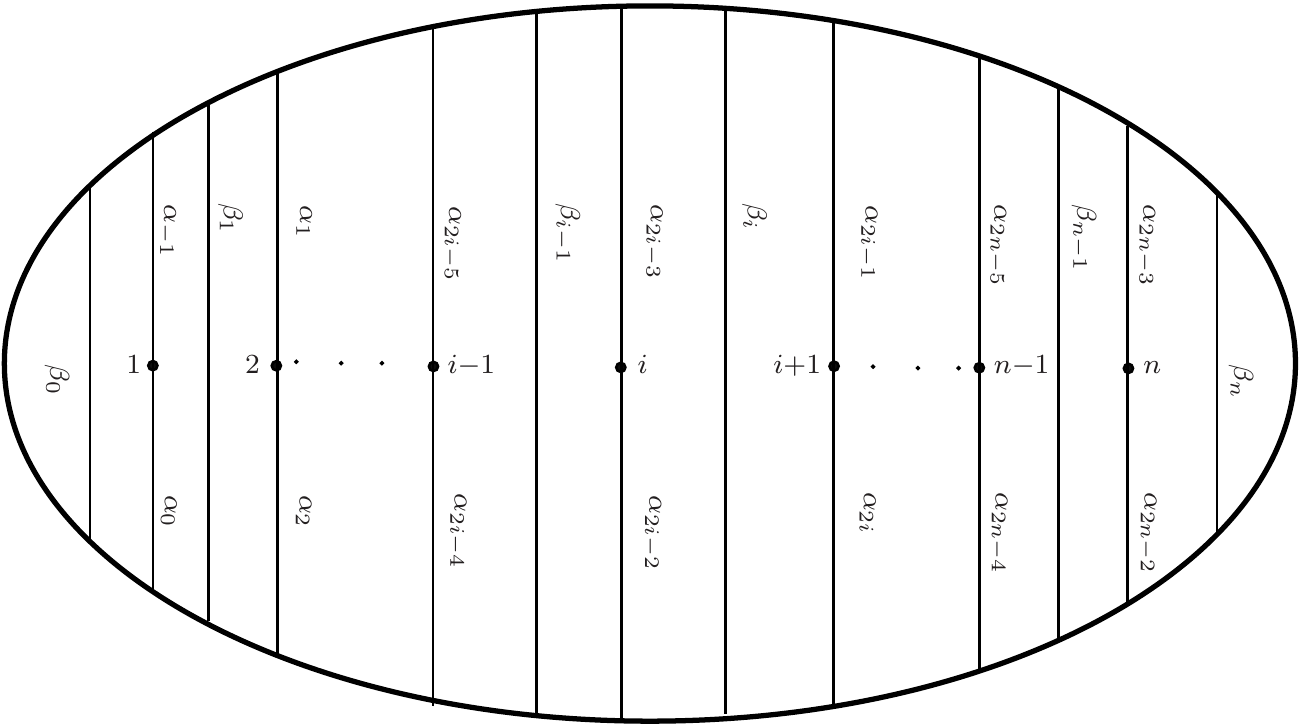}
  \caption{The arcs $\alpha_i$ and $\beta_i$}
  \label{fig:dynn-arcs}
\end{center}
\end{figure}

The {\em (extended) Dynnikov coordinate function} $\rho\colon\cL_n\to\Z^{2n}$
is defined by
\[
\rho(\cL) = (a;\,b) = (a_0,\ldots,a_{n-1};\,b_0,\ldots,b_{n-1}),
\] where
\begin{equation}
\label{eq:dynn-coords} a_i = \frac{\alpha_{2i}-\alpha_{2i-1}}{2}
\qquad\text{and}\qquad b_i = \frac{\beta_i - \beta_{i+1}}{2} \qquad
\end{equation} for $0\le i\le n-1$.

\medskip

Given $1\le i\le n$, let $\Delta_i$ denote the subset of~$D_n$ bounded by the
arcs $\beta_{i-1}$ and $\beta_i$ (which contains puncture~$i$). Let $L$ be a
minimal representative of~$\cL$, and consider the connected components of
$L\cap\Delta_i$. By minimality, each such component is of one of four types:
\begin{itemize}
  \item A {\em right loop} component, which has both endpoints on the arc
    $\beta_{i-1}$ and intersects both of the arcs $\alpha_{2i-3}$ and
    $\alpha_{2i-2}$;

  \item A {\em left loop} component, which has both endpoints on the arc
    $\beta_i$ and intersects both of the arcs $\alpha_{2i-3}$ and
    $\alpha_{2i-2}$;

  \item An {\em above} component, which has one endpoint on each of the arcs
    $\beta_{i-1}$ and $\beta_i$, and intersects the arc $\alpha_{2i-3}$ but not
    the arc $\alpha_{2i-2}$; or

  \item A {\em below} component, which has one endpoint on each of the arcs
    $\beta_{i-1}$ and $\beta_i$, and intersects the arc $\alpha_{2i-2}$ but not
    the arc $\alpha_{2i-3}$.
\end{itemize}

\begin{remark}
\label{rmk:Delta_i} Clearly there cannot be both left loop and right loop
  components. It follows immediately from~(\ref{eq:dynn-coords}) that there
  are~$|b_{i-1}|$ loop components, which are left loops if $b_{i-1}<0$, and
  right loops if $b_{i-1}>0$; and hence that there are
  $\alpha_{2i-3}-|b_{i-1}|$ above components and $\alpha_{2i-2}-|b_{i-1}|$
  below components.
\end{remark}

The intersection numbers $(\alpha\,;\,\beta)$ (and hence the multicurve~$\cL$) can be recovered from the Dynnikov coordinates $(a;\,b) \in
\Z^{2n}$ using the formul\ae

  \begin{eqnarray}
    \label{eq:beta}
    \beta_i &=& -2\,\,\sum_{k=0}^{i-1} b_k \quad\text{ and} \\
    \label{eq:alpha}
    \alpha_i &=&
      \begin{cases} (-1)^ia_{\lceil i/2 \rceil} + \frac{\beta_{\lceil i/2
        \rceil}}{2} & \text{ if } b_{\lceil i/2 \rceil}\ge 0, \\
        (-1)^ia_{\lceil i/2 \rceil} + \frac{\beta_{1 + \lceil i/2 \rceil}}{2} &
        \text{ if } b_{\lceil i/2 \rceil}\le 0,
      \end{cases}
  \end{eqnarray} where $\lceil x \rceil$ denotes the smallest integer which is
not less than~$x$. \eqref{eq:beta} is immediate from~\eqref{eq:dynn-coords} and
the observation that $\beta_0=0$; while~\eqref{eq:alpha} follows
from~\eqref{eq:dynn-coords} and the equation $\alpha_{2i}+\alpha_{2i-1} =
\max(\beta_i, \beta_{i+1})$.

\begin{remark}
\label{rmk:reduced-coords} We have $a_0 = a_{n-1} = 0$ (since $\alpha_{-1} =
  \alpha_0$ and $\alpha_{2n-3}=\alpha_{2n-2})$; and \mbox{$\sum_{i=0}^{n-1} b_i
  = 0$} (since $\beta_0 = \beta_n = 0$). In fact there is one further relation
  \begin{equation}
  \label{eq:b_0} b_0 = - \max_{1\le k\le n-2} \left( |a_k| + b_k^+ +
    \sum_{j=1}^{k-1} b_j
    \right)
  \end{equation} (where $x^+ := \max(x,0)$), which arises from the fact that no
  component of a multicurve can enclose all~$n$ punctures (see for
  example Lemma~1 of~\cite{paper1}). It follows that $\cL$ can be described by
  its \emph{reduced} Dynnikov coordinates $(a_1,...,a_{n-2},
  b_1,...,b_{n-2})\in\Z^{2n-4}$: we can recover the (extended) coordinates by
  setting $a_0=a_{n-1}=0$, defining $b_0$ using~\eqref{eq:b_0}, and finally
  setting $b_{n-1} = -\sum_{j=0}^{n-2} b_j$. The reduced Dynnikov coordinate
  system gives a bijection $\rho_r\colon\cL_n\to\Z^{2n-4}$.
\end{remark}

\begin{remark}
\label{rmk:disjoint-union} If $\cL\ind{1},\ldots,\cL\ind{N}\in\cL_n$ have
  $\iota(\cL\ind{k},
  \cL\ind{\ell}) = 0$ for all~$k$ and $\ell$, then there are pairwise mutually
  disjoint representatives $L\ind{1},\ldots,L\ind{N}$ of the multicurves. We
  write $\cL = \bigsqcup_{k=1}^N \cL\ind{k}$ for the multicurve represented by
  the disjoint union $\bigsqcup_{k=1}^N L\ind{k}$, and observe that $\rho(\cL)
  =
  \sum_{k=1}^N \rho(\cL\ind{k})$.
\end{remark}

The following notation will be useful when we discuss the complexity of
algorithms involving Dynnikov coordinates.
\begin{notn}[$|\cL|$] Let~$\cL\in\cL_n$ with $\rho(\cL)=(a\,;\,b)$. We write
$|\cL| =
\sum_{i=0}^{n-1} (|a_i|+|b_i|)$.

\end{notn}

\subsection{The action of the braid group} The mapping class group $\MCG(D_n)$
of $D_n$ is isomorphic to the $n$-braid group~$B_n$ modulo its
center~\cite{emil2}, so that elements of $\MCG(D_n)$ can be represented in
terms of the Artin braid generators $\sigma_i$ ($1\le i\le n-1$). In this paper
we adopt the convention of Birman's book~\cite{birman}, that $\sigma_i$
exchanges punctures~$i$ and~$i+1$ in the counter-clockwise direction.

The action of $\MCG(D_n)$ on $\cL_n$ can be calculated using {\em update rules}
  (see for example~\cite{D02, M06, or08, paper1, paper2}), which describe how
  Dynnikov coordinates transform under the action of the Artin generators and
  their inverses. In this paper we only need the transformation under the
  positive generators~$\sigma_i$, which is given by Theorem~\ref{thm:update}
  below. In this theorem statement we again use the notation $x^+$ to denote
  $\max(x,0)$.

\begin{thm}[Update rules for positive generators]
\label{thm:update} Let $\cL\in\cL_n$ have Dynnikov coordinates $(a;\,b)$, and
let $1\le i\le n-1$. Denote by $(a';\,b')$ the Dynnikov coordinates of the
multicurve $\sigma_i(\cL)$. Then $a_j' = a_j$ and $b_j' = b_j$ for all
$j\not\in\{i-1,i\}$, and

\begin{equation}
  \label{eq:pos-update}
  \begin{aligned} a_{i-1}' &= \max(a_{i-1} + b_{i-1}^+,\,\, a_i + b_{i-1}),\\
    a_i' &= b_i - \max(-a_{i-1},\,\,b_i^+ - a_i),\\ b_{i-1}' &= a_i + b_{i-1} +
    b_i - \max(a_{i-1} + b_{i-1}^+ + b_i^+,\,\, a_i + b_{i-1}),\\ b_i' &=
    \max(a_{i-1} + b_{i-1}^+ + b_i^+,\,\, a_i + b_{i-1}) - a_i.
  \end{aligned}
\end{equation}

\end{thm}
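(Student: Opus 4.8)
The plan is to prove the update rules by tracking how a minimal representative of $\cL$ changes under the half-twist $\sigma_i$. Since $\sigma_i$ is supported in a disk containing only punctures $i$ and $i+1$, and since all the Dynnikov arcs except $\alpha_{2i-3},\ldots,\alpha_{2i}$ and $\beta_{i-1},\beta_i,\beta_{i+1}$ lie outside (or are carried homeomorphically onto themselves by) this disk, it is immediate that $a_j'=a_j$ and $b_j'=b_j$ for $j\notin\{i-1,i\}$. So the whole content is a local computation inside the region $\Delta_i\cup\Delta_{i+1}$.

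First I would set up the local picture: the relevant piece of $L$ in $\Delta_i\cup\Delta_{i+1}$ is determined, up to isotopy, by the intersection numbers $\alpha_{2i-3},\alpha_{2i-2},\alpha_{2i-1},\alpha_{2i}$ with the four $\alpha$-arcs and $\beta_{i-1},\beta_i,\beta_{i+1}$ with the three $\beta$-arcs, together with the combinatorial data of Remark~\ref{rmk:Delta_i} recording how many loop components of each side and how many above/below components sit in each of $\Delta_i$ and $\Delta_{i+1}$. Using \eqref{eq:beta} and \eqref{eq:alpha} these are all expressible in terms of $a_{i-1},a_i,b_{i-1},b_i$ (and $\beta_i$, which only enters through $\max(\beta_i,\beta_{i+1})$-type combinations). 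Then I would apply the half-twist to this local tangle, count the intersection numbers $\alpha'_{2i-3},\ldots,\alpha'_{2i},\beta'_{i-1},\beta'_i,\beta'_{i+1}$ of the image with the same arcs after pulling it tight, and feed the results back through \eqref{eq:dynn-coords} to obtain $a'_{i-1},a'_i,b'_{i-1},b'_i$.

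The main obstacle — and the reason the formulas involve nested maxima and the positive-part operator — is that the image tangle is generally not taut: after the twist, some arcs of $L$ form bigons with the $\alpha$- or $\beta$-arcs that must be removed before counting. Which bigons appear depends on the signs of $b_{i-1}$ and $b_i$ (left versus right loops) and on which of $\alpha_{2i-3},\alpha_{2i-2}$ dominates, so the honest approach is a case analysis over these sign patterns; in each case one draws the relaxed image and reads off the new intersection numbers, and the $\max$'s in \eqref{eq:pos-update} are exactly what glue the cases into uniform formulas. Rather than grinding through all cases pictorially, I would either (a) verify the formulas on the finitely many "building-block" multicurves — a loop around a single puncture, an arc separating a block of punctures — and then invoke additivity of Dynnikov coordinates under disjoint union (Remark~\ref{rmk:disjoint-union}) together with the piecewise-linearity of both sides to extend to all of $\cL_n$; or (b) cite that these update rules are established in the references \cite{D02, M06, or08, paper1, paper2} and merely indicate the translation of conventions. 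Approach (a) is cleaner: once one checks that every multicurve is, Dynnikov-coordinate-wise, a nonnegative integer combination of a small explicit generating set closed under the relevant local moves, and that \eqref{eq:pos-update} is additive on each sign-cone, the theorem reduces to a finite verification.

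A final point to watch is consistency: one should check that the candidate right-hand sides in \eqref{eq:pos-update} indeed lie in $\Z^{2n}$ (i.e. the claimed $\alpha'$-combinations are even and the $\beta'$ relations hold) and that applying the analogous rule for $\sigma_i^{-1}$ recovers the identity, which serves as a useful internal check on the case analysis.
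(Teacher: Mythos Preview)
The paper does not prove this theorem at all: it is stated as a quotation from the literature, with the remark preceding it pointing to the references \cite{D02, M06, or08, paper1, paper2}. So your option~(b) is precisely what the paper does, and nothing more is expected here.

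Your option~(a), on the other hand, has a genuine gap as written. The right-hand sides of~\eqref{eq:pos-update} are only piecewise linear, so even if the Dynnikov coordinate vector of~$\cL$ decomposes as a nonnegative integer combination of vectors of ``building-block'' multicurves, the formula need not respect that decomposition unless all summands lie in the same linearity cone; and there is no reason an arbitrary~$\cL$ should admit such a decomposition within a single cone. (Note also that a loop around a single puncture is inessential and hence not a multicurve in the paper's sense, and that Remark~\ref{rmk:disjoint-union} gives additivity only for \emph{disjoint} multicurves, which does not let you build arbitrary connected curves from simpler ones.) The honest route to a self-contained proof really is the case analysis you describe first --- tracking the local tangle in $\Delta_i\cup\Delta_{i+1}$ through the twist and tightening --- and that is what the cited references do.
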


\section{Geometric intersection number with an elementary multicurve}
\label{sec:int-num-elementary}
\begin{defn}[Elementary multicurve $\cL_{i,j}$] Let $1\le i < j \le n$, with
  $(i,j)\not=(1,n)$. The \emph{elementary multicurve $\cL_{i,j}\in\cL_n$ about
  punctures~$i$ through~$j$} is the multicurve with Dynnikov coordinates
  $(a\,;\,b)\in\Z^{2n}$ which are all zero except for $b_{i-1} = -1$ and
  $b_{j-1} = 1$. (This is equivalent to saying that $\cL_{i,j}$ is represented
  by a simple closed curve which bounds a disk containing punctures~$i$
  through~$j$, and intersects the diameter of~$D_n$ exactly twice.)
\end{defn}

In this section we obtain a formula for $\iota(\cL, \cL_{i,j})$,
given a multicurve $\cL\in\cL_n$. We start by introducing some
notation.

\begin{notn}[$A_i$, $B_i$, $A_{\ell, m}$, and $B_{\ell, m}$]
\label{notn:A_i-B_i} Let $\cL\in\cL_n$ be a multicurve with Dynnikov
  coordinates~$(a\,;\,b)$ and intersection numbers $(\alpha\,;\,\beta)$ with
  the Dynnikov arcs. For each $i$ with $1\le i\le n$, we write
  \[ A_i = \alpha_{2i-3} - |b_{i-1}| \qquad\text{and}\qquad B_i = \alpha_{2i-2}
    - |b_{i-1}|.
  \] For each $\ell$ and $m$ with $1\le \ell\le m \le n$, write
  \begin{equation}
  \label{eq:A_i,j-B_i,j} A_{\ell, m} = \min_{\ell\le k\le m} A_k \qquad
    \text{and} \qquad B_{\ell,m} = \min_{\ell\le k\le m} B_k,
  \end{equation}
\end{notn}

\begin{remark}
\label{rmk:Delta_l,m} By Remark~\ref{rmk:Delta_i}, $A_i$ and $B_i$ are,
  respectively, the number of above and below components in~$\Delta_i$.

  Given $1\le \ell\le m\le n$, let $\Delta_{\ell, m} = \bigcup_{i=\ell}^m
  \Delta_i$ be the subset of $D_n$ bounded by $\beta_{\ell-1}$ and $\beta_m$.
  If~$L$ is a minimal representative of a multicurve~$\cL$, then a
  component of $L\cap \Delta_{\ell, m}$ is called a \emph{large over}
  (respectively \emph{large under}) component if it lies entirely above
  (respectively below) the diameter of~$D_n$. Since large over components are
  the highest components in each of the~$\Delta_i$, it follows that $A_{\ell,
  m}$ is the number of large over components of~$L\cap\Delta_{\ell, m}$; and
  analogously $B_{\ell, m}$ is the number of large under components.
\end{remark}

\begin{lem}[Intersections with an elementary multicurve]
\label{lem:intersect-elementary} Let $1\le i < j \le n$ with $(i,j)\not=(1,n)$;
  and let $\cL\in\cL_n$ be a multicurve. Write
  \begin{align*} R &= \min(A_{i, j-1} - A_{i,j},\,\, B_{i, j-1} - B_{i, j},\,\,
  b_{j-1}^+), \,\, \text{ and}\\ L &= \min(A_{i+1, j} - A_{i, j},\,\, B_{i+1,
  j} - B_{i, j},\,\, (-b_{i-1})^+).
  \end{align*} where $A_{i,j}$ and $B_{i,j}$ are defined
  by~\eqref{eq:A_i,j-B_i,j}. Then
  \[
    \iota(\cL, \cL_{i, j}) = \beta_{i-1} + \beta_j - 2(R + L + A_{i,j} +
    B_{i,j}).
  \]
\end{lem}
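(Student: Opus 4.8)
The plan is to prove the identity by choosing representatives $K$ of $\cL$ and $K_{i,j}$ of $\cL_{i,j}$ that are simultaneously in minimal position, and counting $\#(K\cap K_{i,j})$ directly. I would draw $K_{i,j}$ inside the subsurface $\Delta_{i,j}$ of Remark~\ref{rmk:Delta_l,m}, consisting of an innermost loop about puncture~$i$ (with endpoints on the arc $\beta_i$), an innermost loop about puncture~$j$ (with endpoints on $\beta_{j-1}$), an arc joining these two loops above the diameter through $\Delta_{i+1},\dots,\Delta_{j-1}$, and an arc joining them below it. Then $K_{i,j}$ is disjoint from the arcs $\beta_{i-1}$ and $\beta_j$, so it separates $\Delta_{i,j}$ into the disk $D'$ carrying punctures $i,\dots,j$ and an unpunctured annulus $A'=\Delta_{i,j}\setminus D'$ lying between $K_{i,j}$ and $\partial\Delta_{i,j}$. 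For $K$ I would take a minimal representative of $\cL$ with respect to all of the Dynnikov arcs, so that its intersection numbers with them are $(\alpha;\beta)$; some care is needed to choose $K$ so that it is also in minimal position against $K_{i,j}$, and (for the lower bound below) so that the combinatorial pattern in which its strands are paired up near punctures~$i$ and~$j$ is under control. I expect the bulk of the work to lie here.

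Granting this, since $K_{i,j}\subset\Delta_{i,j}$, every point of $K\cap K_{i,j}$ lies on a component of $K\cap\Delta_{i,j}$. A closed component of $K\cap\Delta_{i,j}$ encircles some subset of the punctures $\{i,\dots,j\}$ and hence can be taken to lie inside $K_{i,j}$, contributing nothing; each arc component has its two endpoints on $\beta_{i-1}\cup\beta_j$ (it cannot meet $\partial D_n$, and any endpoint elsewhere would be interior), so there are exactly $(\beta_{i-1}+\beta_j)/2$ of them. The crucial point is that, in minimal position, each such arc meets $K_{i,j}$ in either $0$ or $2$ points: an arc meeting it four or more times would contain a subarc lying in the annulus $A'$ with both endpoints on $K_{i,j}$, and an innermost such subarc would cut off a bigon from $A'$ together with a piece of $K_{i,j}$, contradicting minimality. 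Consequently $\iota(\cL,\cL_{i,j})=\#(K\cap K_{i,j})=\beta_{i-1}+\beta_j-2N$, where $N$ is the number of arc components of $K\cap\Delta_{i,j}$ that are disjoint from $K_{i,j}$, equivalently those lying inside $A'$.

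It remains to identify $N$ with $R+L+A_{i,j}+B_{i,j}$, which I would do by sorting the arcs lying in $A'$ according to their endpoints. An arc with one endpoint on $\beta_{i-1}$ and one on $\beta_j$ must stay above every $\Delta_k$, or below every $\Delta_k$ (a turn at an intermediate puncture would force it to cross the upper or lower strand of $K_{i,j}$): these are precisely the $A_{i,j}$ large over and $B_{i,j}$ large under components of Remark~\ref{rmk:Delta_l,m}. An arc with both endpoints on $\beta_{i-1}$ cannot bound a bigon with $\beta_{i-1}$, so it wraps once around $K_{i,j}$ inside $A'$: it runs above $\Delta_i,\dots,\Delta_{j-1}$, passes around the right-hand loop of $K_{i,j}$ (itself forming a loop about puncture~$j$), and returns below $\Delta_i,\dots,\Delta_{j-1}$ — call it a right arc; symmetrically an arc with both endpoints on $\beta_j$ is a left arc, using a loop about puncture~$i$. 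For the inequality $N\le R+L+A_{i,j}+B_{i,j}$ I would use a slot-counting argument valid in any minimal position: a right arc consumes an above-run across $\Delta_{i,j-1}$ reaching $\beta_{i-1}$ that does not continue into $\Delta_j$ above, and by $A_{i,j}=\min(A_{i,j-1},A_j)$ there are exactly $A_{i,j-1}-A_{i,j}$ of these; likewise it consumes one of $B_{i,j-1}-B_{i,j}$ below-runs and one of the $b_{j-1}^+$ loops about puncture~$j$; and these allocations are disjoint from those made by the left arcs and the large components, so there can be at most $R$ right arcs and at most $L$ left arcs. For the reverse inequality I would return to the representative $K$ constructed at the start, whose strand pairing near punctures~$i$ and~$j$ is arranged so that exactly $R$ right arcs and $L$ left arcs actually occur; establishing that such a $K$ exists and is genuinely a minimal representative with respect to the Dynnikov arcs is the main obstacle. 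Combining the two inequalities gives $N=R+L+A_{i,j}+B_{i,j}$, and hence the stated formula.
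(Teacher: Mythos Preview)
Your approach is essentially the paper's: put both curves in minimal position, observe that each arc component of $K\cap\Delta_{i,j}$ meets $K_{i,j}$ either not at all or exactly twice, and classify the disjoint arcs as large over, large under, large right loop, and large left loop components.

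Where you diverge is in treating the count of right (and left) arcs as an inequality sandwich, flagging the lower bound --- the existence of a representative~$K$ with exactly~$R$ right arcs --- as the main obstacle. The paper sidesteps this. Once~$K$ is minimal with respect to the Dynnikov arcs, the combinatorial type of $K\cap\Delta_{i,j}$ is already completely determined (the strands in each~$\Delta_k$ are nested in a forced pattern), so the number of large right loops is a single well-defined integer that can be read off directly rather than bounded from two sides. The paper's argument, carried largely by a figure, is that this nesting forces the innermost $A_{i,j-1}-A_{i,j}$ non-extending large-over arcs of $\Delta_{i,j-1}$, the innermost $B_{i,j-1}-B_{i,j}$ non-extending large-under arcs, and the $b_{j-1}^+$ right-loop pieces in~$\Delta_j$ to pair up as far as possible, yielding exactly $R=\min$ of the three as complete large right loops. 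So the construction you identify as the main obstacle is not needed; your slot-counting gives~$\le R$, and the forced nesting in the (essentially unique) minimal representative gives equality.
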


\begin{proof} Let $C_{i,j}$ be a minimal representative of~$\cL_{i,j}$, and let
  $L$ be a representative of~$\cL$ which is minimal with respect both to the
  Dynnikov arcs and to $C_{i,j}$. Every component of $L\cap\Delta_{i,j}$ is
  therefore either disjoint from $C_{i,j}$ or intersects it exactly twice.

  Components of $L\cap\Delta_{i,j}$ which are disjoint from $C_{i,j}$ are
  precisely:
  \begin{itemize}
    \item Components which are contained in the interior of $\Delta_{i,j}$;
    \item Large over and large under components, which have one endpoint
    on~$\beta_{i-1}$ and one on~$\beta_j$;
    \item \emph{Large right loop} components, which have both endpoints on the
    arc $\beta_{i-1}$ and intersect the diameter of~$D_n$ only between
    $\beta_j$ and puncture~$j$; and
    \item \emph{Large left loop} components, which have both endpoints on the
    arc $\beta_j$ and intersect the diameter of~$D_n$ only between
    $\beta_{i-1}$ and puncture~$i$.
  \end{itemize}

  The total number of large over and under components is~$A_{i,j} + B_{i,j}$,
  by Remark~\ref{rmk:Delta_l,m}. The proof can therefore be completed by
  showing that the number of large right (respectively left) loop components
  is~$R$ (respectively~$L$).

  Since $A_{i, j-1} - A_{i,j}$ (respectively $B_{i,j-1} - B_{i,j}$) is the
  number of large over (respectively under) components of $L\cap\Delta_{i,
  j-1}$ which are not contained in large over (respectively under) components
  of $L\cap \Delta_{i,j}$; and $b_{j-1}^+$ is the number of right loop
  components of $\Delta_j$ (see Remark~\ref{rmk:Delta_i}), the number of large
  right loop components is the minimum of these three numbers, namely~$R$ (see
  Figure~\ref{fig:deltas}). The argument that there are~$L$ large left loop
  components is analogous.

\begin{figure}[htbp]
  \begin{center}
    \includegraphics[width=0.75\textwidth]{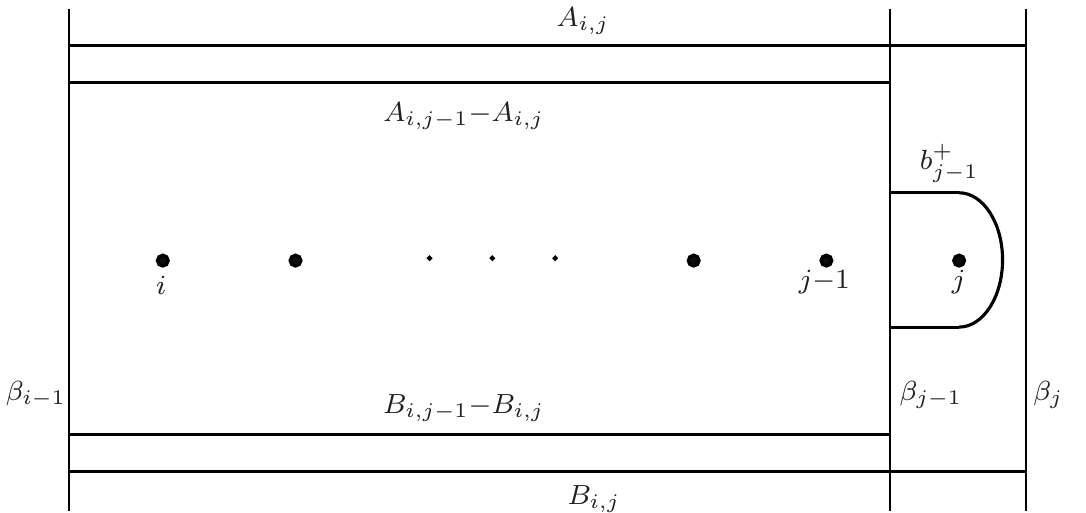}
    \caption{The number of large right loop components in $\Delta_{i,j}$}
    \label{fig:deltas}
  \end{center}
\end{figure}

\end{proof}

\section{Cumplido's relaxation algorithm}
\label{sec:cumplido}
\begin{defn}[Relaxed multicurve] A multicurve~$\cL\in\cL_n$, with
  Dynnikov coordinates $(a\,;\,b) \in \Z^{2n}$, is said to be \emph{relaxed} if
  $a_i=0$ for all~$i$.
\end{defn}

We observe that a multicurve is relaxed if and only if it is a
disjoint union $\cL =\bigsqcup_{k=1}^N \cL_{i_k, j_k}$ of elementary
multicurves. On the one hand, it is immediate that such a multicurve has
$a_i=0$ for all~$i$. Conversely, the following algorithm --- essentially a
bracket matching algorithm --- parses a relaxed multicurve as such a disjoint
union.

\begin{alg}[Parsing a relaxed multicurve]
  \label{alg:parse} Let~$\cL\in\cL_n$ be a relaxed multicurve with Dynnikov
  coordinates~$(a\,;\, b)\in\Z^{2n}$. The following algorithm returns a list
  \mbox{$\cC=( (i_i, j_1), \ldots, (i_N, j_N) )$} with the property that
  $\cL=\bigsqcup_{k=1}^N \cL_{i_k, j_k}$.

  \begin{algorithmic}[1]
    \myState{ $\cC\gets \mbox{ empty list}$}
    \myState{ $s\gets \mbox{ empty stack}$}
    \For{$i$ from $1$ to $n$}
      \If{$b_{i-1}<0$}
        \myState{ push $-b_{i-1}$ copies of $i$ onto $s$}
      \Else
        \myState{ pop $b_{i-1}$ top entries $\ell_1,\ldots,\ell_{b_{i-1}}$
        from~$s$}
        \myState{ add $(\ell_1, i),\ldots,(\ell_{b_{i-1}}, i)$ to $\cC$}
      \EndIf
    \EndFor
    \myState{ \textbf{return} $\cC$}
  \end{algorithmic}
\end{alg} Note that since $\sum_{k=0}^{i-1}b_k = -\beta_i/2 \le 0$ for each~$i$
by~\eqref{eq:beta}, the stack~$s$ is never empty at line~7. Moreover, if
$(i,j)$ and $(i',j')$ are in~$\cC$ with $i<i'$, then either $j<i'$ or $j\ge
j'$, so that $\iota(\cL_{i,j}, \cL_{i',j'}) = 0$. The
multicurves~$\cL_{i_k,j_k}$ can therefore be realised disjointly by
Remark~\ref{rmk:disjoint-union}. That $\cL=\bigsqcup_{k=1}^N \cL_{i_k, j_k}$
has Dynnikov coordinates~$(a\,;\,b)$ then follows from
Remark~\ref{rmk:Delta_i}, since if~$L$ is a minimal representative of $\cL$,
then $L\cap \Delta_i$ has $-b_{i-1}$ left loop components if $b_{i-1}\le 0$,
and $b_{i-1}$ right loop components if $b_{i-1} \ge 0$.

Cumplido~\cite{Cumplido} gives an algorithm which takes as input the Dynnikov
coordinates $(a\,;\,b)\in\Z^{2n}$ of~$\cL\in\cL_n$, and produces as output a
braid $\beta\in B_n^+$ (the positive braid monoid) and a relaxed multicurve
$\cL' = \beta(\cL)$ (in fact, $\beta$ is the unique prefix-minimal positive
braid which relaxes~$\cL$ in this way).

\begin{alg}[Cumplido's relaxation algorithm]
\label{alg:cumplido} Let $(a\,;\,b)\in\Z^{2n}$ be the Dynnikov coordinates of
$\cL\in\cL_n$. The following algorithm returns~$\beta\in B_n^+$ and the
Dynnikov coordinates of a relaxed multicurve $\cL'$ with $\cL' = \beta(\cL)$.

\begin{algorithmic}[1]
  \myState{ $\beta\gets\operatorname{id}$}
  \myState{ $j\gets 1$}
  \While{$j<n$}
    \If{$a_j > a_{j-1}$}
      \myState{ $(a\,;\,b) \gets \sigma_j(a\,;\,b)$}
      \Comment{Use~\eqref{eq:pos-update}}
      \myState{ $\beta\gets\beta\cdot\sigma_j$}
      \myState{ $j\gets 1$}
    \Else
      \myState{ $j\gets j+1$}
    \EndIf
  \EndWhile
  \myState{ \textbf{return} $(\beta, (a\,;\,b))$}
\end{algorithmic}

\end{alg}

The following result is contained in Corollaries~44 and~46 of~\cite{Cumplido}.

\begin{thm}[Cumplido]
\label{thm:cumplido-complexity} Let~$\cL\in\cL_n$ and write $m =|\cL|$. Then
  Algorithm~\ref{alg:cumplido} requires $O(n^2m)$ arithmetic operations; and
  the word length of the relaxing braid~$\beta$ which it returns is $O(n^2m)$.
\end{thm}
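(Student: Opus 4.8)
The plan is to analyse Algorithm~\ref{alg:cumplido} by tracking a suitable monovariant, namely a weighted sum of the absolute values of the Dynnikov coordinates, and to show that this monovariant decreases by at least a definite amount with each application of an update rule in line~5. Concretely, I would first establish that the quantity $|\cL| = \sum_{i}(|a_i| + |b_i|)$, or perhaps a closely related weighted version $\sum_i w_i(|a_i|+|b_i|)$ with weights chosen to make the bookkeeping clean, is non-increasing under the operation $(a\,;\,b)\mapsto\sigma_j(a\,;\,b)$ performed at line~5, and that it strictly decreases whenever the guard $a_j > a_{j-1}$ holds. This is the heart of the matter: one feeds the explicit update rules~\eqref{eq:pos-update} into the expression for $|\cL|$ and, using the hypothesis $a_j > a_{j-1}$ (equivalently $a_j \ge a_{j-1}+1$ for integers), checks that the new value of the relevant local contribution $|a_{j-1}'| + |a_j'| + |b_{j-1}'| + |b_j'|$ is smaller than the old one $|a_{j-1}| + |a_j| + |b_{j-1}| + |b_j|$. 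A case analysis on the signs of $b_{j-1}$, $b_j$ (which govern which branch of the $\max$ expressions is active) reduces this to a handful of elementary inequalities. I would expect the decrease to be by at least~$1$ in the worst case, so that the number of times line~5 is reached is $O(m)$ with $m = |\cL|$ — but I should be careful, since $|\cL|$ might temporarily fail to capture the decrease at the boundary coordinates $a_0, a_{n-1}, b_0, b_{n-1}$; if so, a mildly renormalised potential function restores monotonicity. This argument is essentially the content of Corollary~44 of~\cite{Cumplido} and I would either reproduce it or cite it directly.

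Given the bound on the number of executions of line~5, the word-length claim is almost immediate: the braid~$\beta$ gains exactly one generator each time line~5 runs, so its word length equals the number of such executions. The subtler point is that this number is $O(n^2 m)$ rather than $O(m)$, which comes from the structure of the \textbf{while} loop rather than from the monovariant alone: between two successive decreases of the potential the index~$j$ can advance through all of $1,\dots,n-1$, and after each decrease it is reset to~$1$. So the accounting is: each of the $O(m)$ (or, more carefully, $O(nm)$ — see below) ``productive'' iterations is preceded by at most $n-1$ ``unproductive'' iterations in which only line~9 runs. Combining these gives $O(n^2 m)$ total loop iterations. Each iteration costs $O(1)$ arithmetic operations (evaluating the four expressions in~\eqref{eq:pos-update}, or a comparison), except that one should note the integers involved can grow, but only polynomially, so the arithmetic-operation count — as opposed to bit-operation count — is what the theorem asserts, and that is $O(n^2 m)$.

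The main obstacle I anticipate is getting the constant-in-$n$ factor in the decrease estimate right: naively, $|\cL|$ might decrease by only a bounded amount per productive step, giving $O(m)$ productive steps and hence the stated $O(n^2 m)$; but if instead the natural potential only decreases by $\Omega(1/n)$ in some configurations — because the ``action'' of $\sigma_j$ can shuffle weight between coordinates in a way that the crude sum does not see — then one would need a more refined potential, and the productive-step count becomes $O(nm)$, still yielding $O(n^2 m)$ once multiplied by the $O(n)$ unproductive steps only if the loop structure is re-examined. I would resolve this by adopting precisely Cumplido's potential function and her two key lemmas (the decrease estimate and the resulting step count), since Corollaries~44 and~46 of~\cite{Cumplido} already package exactly these two conclusions; the role of this section is then simply to quote them in the form needed downstream, after checking that our extended-coordinate conventions and our statement of the update rules in Theorem~\ref{thm:update} match hers up to the trivial translation of Remark~\ref{rmk:reduced-coords}.
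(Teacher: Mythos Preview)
The paper does not prove this theorem: it is stated without proof, prefaced only by the sentence ``The following result is contained in Corollaries~44 and~46 of~\cite{Cumplido}.'' Your final paragraph lands on exactly this --- quote Cumplido's corollaries after checking that the coordinate conventions match --- and that is all the paper does. The preceding monovariant sketch is therefore superfluous for the purpose of matching the paper's treatment.

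As a side remark on the sketch itself: your accounting is tangled. The word length of~$\beta$ is \emph{equal} to the number of executions of line~5, so if your potential argument gives $O(m)$ productive steps, you have already bounded the word length by $O(m)$, not $O(n^2m)$; the factor of~$n$ from unproductive loop iterations (line~9) affects only the total iteration count, not the word length. You then hedge toward $O(nm)$ productive steps via a hypothetical $\Omega(1/n)$ decrease, but this is speculation rather than argument. None of this matters for the paper, which simply imports the result, but if you ever want to reconstruct Cumplido's proof you will need to straighten out which quantity the potential actually bounds and by how much it drops per step.
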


\section{The geometric intersection number algorithm}
\label{sec:the-alg} It is now straightforward to state the geometric intersection number
algorithm, which relies on the fact that if $\cL\ind{1},\cL\ind{2}\in\cL_n$ and
$\beta\in B_n$, then $\iota(\beta(\cL\ind{1}), \beta(\cL\ind{2})) =
\iota(\cL\ind{1},
\cL\ind{2})$.

\begin{alg}[geometric intersection number algorithm]
\label{alg:int-num} Let $(a\ind{1}\,;\,b\ind{1})$ and $(a\ind{2}\,;\,b\ind{2})$
  be the Dynnikov coordinates of $\cL\ind{1},\cL\ind{2}\in\cL_n$. The following
  algorithm returns $\iota(\cL\ind{1}, \cL\ind{2})$.

  \begin{algorithmic}[1]
    \myState{ If $(a\ind{1}\,;\,b\ind{1})$, $(a\ind{2}\,;\,b\ind{2})$ are reduced
    coordinates, extend them.} \Comment{Use Remark~\ref{rmk:reduced-coords}}
    \myState{ Find $\beta\in B_n^+$ such that $\cL\indp{1} = \beta(\cL\ind{1})$
    is relaxed.} \Comment{Use Algorithm~\ref{alg:cumplido}}
    \myState{ Parse $\cL\indp{1} = \bigsqcup_{k=1}^N \cL_{i_k, j_k}$.} \Comment{Use
    Algorithm~\ref{alg:parse}}
    \myState{ Calculate the Dynnikov coordinates of $\cL\indp{2} =
    \beta(\cL\ind{2})$.} \Comment{Use~\eqref{eq:pos-update}}
    \myState{ Determine the intersection numbers $(\alpha\,;\,\beta)$ of
    $\cL\indp{2}$.} \Comment{Use~\eqref{eq:beta} and~\eqref{eq:alpha}}
    \myState{ \textbf{Return} $\sum_{i=1}^k \iota(\cL\indp{2}, \cL_{i_k, j_k})$.}
    \Comment{Use Lemma~\ref{lem:intersect-elementary}}
  \end{algorithmic}
\end{alg}

\begin{remark}
  The same algorithm may be used to compute the measure of a multicurve~$\cL$ with respect to a measured foliation~$(\cF, \mu)$ on~$D_n$,
  described by its Dynnikov coordinates $\rho(\cF,\mu)\in\R^{2n}$ (or its
  reduced Dynnikov coordinates $\rho_r(\cF,\mu)\in\R^{2n-4}$).
\end{remark}

\begin{thm}[Complexity of the geometric intersection number algorithm] Let
$\cL\ind{1},\cL\ind{2}\in\cL_n$, and write $m=|\cL\ind{1}| + |\cL\ind{2}|$.
Then Algorithm~\ref{alg:int-num} has complexity $O(m^2n^4)$.
\end{thm}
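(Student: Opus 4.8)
The plan is to go through the six steps of Algorithm~\ref{alg:int-num} in turn, bound the cost of each in terms of~$n$ and the sizes of the relevant multicurves, and then combine. The key preliminary observation — and the crux of the whole estimate — is that one must control how large the Dynnikov coordinates can become along the way. The relaxing braid~$\beta$ returned by Algorithm~\ref{alg:cumplido} has word length $O(n^2m)$ by Theorem~\ref{thm:cumplido-complexity}, and each application of an update rule~\eqref{eq:pos-update} is a max of sums of a bounded number of coordinates, so a single generator can at most (roughly) double the value of $|\cL|$; hence after applying a positive braid of length~$\ell$ to a multicurve~$\cL$, the resulting coordinates have absolute value at most $2^{\ell}|\cL|$ in the worst case. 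A naive bound of this kind is exponential and useless, so the first real task is to argue that the coordinates in fact grow only \emph{polynomially}: the relaxed multicurve $\cL\indp{1}=\beta(\cL\ind{1})$ satisfies $|\cL\indp{1}|\le |\cL\ind{1}|$ (indeed $\iota$ with the diameter is non-increasing, and for a relaxed multicurve $|\cL\indp{1}|$ equals half that intersection number), and similarly, because $\beta$ is exactly the prefix-minimal braid relaxing $\cL\ind{1}$ but \emph{not} $\cL\ind{2}$, one shows that $|\cL\indp{2}| = O(m^2 n^2)$ or some similar polynomial bound by tracking the intersection number of $\cL\ind{2}$ with the images of the Dynnikov arcs under the partial products of~$\beta$. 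I would isolate this growth bound as the main lemma of the proof.

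Granting a bound $|\cL\indp{k}| \le M$ with $M = O(\operatorname{poly}(m,n))$ on all intermediate coordinate vectors, the remaining steps are routine. Step~1 (extending reduced coordinates via Remark~\ref{rmk:reduced-coords}) costs $O(n)$ arithmetic operations, dominated by the max in~\eqref{eq:b_0}. Step~2 (Cumplido's algorithm) costs $O(n^2 m)$ arithmetic operations by Theorem~\ref{thm:cumplido-complexity}, and produces~$\beta$ of word length $O(n^2 m)$. Step~3 (parsing $\cL\indp{1}$ by Algorithm~\ref{alg:parse}): the \texttt{for} loop runs $n$ times, and the total number of stack push/pop operations is $\sum_i |b_{i-1}| \le |\cL\indp{1}| \le |\cL\ind{1}| \le m$, so this is $O(n+m)$, and the list~$\cC$ has length $N \le m$. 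Step~4 (apply~$\beta$, of length $O(n^2m)$, to $\cL\ind{2}$ via~\eqref{eq:pos-update}): each generator is $O(1)$ arithmetic operations, so $O(n^2 m)$ operations total. Step~5 (recover $(\alpha;\beta)$ for $\cL\indp{2}$ from~\eqref{eq:beta},~\eqref{eq:alpha}): $O(n)$ operations. Step~6 (sum $\iota(\cL\indp{2},\cL_{i_k,j_k})$ over $k=1,\ldots,N$ using Lemma~\ref{lem:intersect-elementary}): each term requires the quantities $A_{i,j}, A_{i,j-1}, A_{i+1,j}$ and their $B$-counterparts, i.e.\ minima of the $A_k, B_k$ over intervals; precomputing all the $A_k, B_k$ is $O(n)$ work, and then with a suitable data structure (or simply recomputing each interval minimum in $O(n)$ time) each of the $N\le m$ terms costs $O(n)$, giving $O(mn)$ arithmetic operations for this step.

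Adding up, the number of arithmetic operations across all steps is $O(n^2 m)$. However, complexity here should be measured in \emph{bit operations}, not arithmetic operations, because the integers involved can have $O(\log M) = O(\log(mn))$ bits, so each arithmetic operation (addition, comparison, the maxima in the update rules) costs $O(\log M)$ bit operations; this contributes only a logarithmic factor and would give $O(n^2 m \log(mn))$. To reach the stated $O(m^2 n^4)$ I instead expect the intended accounting to be coarser: one treats each update-rule application and each evaluation of Lemma~\ref{lem:intersect-elementary} as costing time polynomial in the \emph{size of the numbers}, and the word length $O(n^2 m)$ of~$\beta$ enters both Step~4 and (through the size of the resulting coordinates of $\cL\indp{2}$) Step~6, so that Step~6 alone costs roughly (number of elementary curves $N = O(m)$) $\times$ ($O(n)$ per evaluation) $\times$ (a factor $O(nm)$ or $O(n^3)$ from the magnitude of the coordinates), yielding the $m^2 n^4$ bound. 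Thus after the growth lemma, the proof is a matter of carefully choosing the cost model and multiplying the per-step bounds; the hard part is genuinely the polynomial bound on the intermediate coordinate sizes, and in particular the bound on $|\cL\indp{2}|$, since without it the whole estimate collapses to something exponential.
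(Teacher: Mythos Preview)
Your step-by-step count of \emph{arithmetic operations} is essentially the same as the paper's: $O(n)$ for extending coordinates, $O(n^2m)$ for Cumplido's algorithm, $O(m+n)$ for parsing, $O(n^2m)$ for applying~$\beta$ to~$\cL\ind{2}$, $O(n)$ for recovering $(\alpha;\beta)$, and $O(mn)$ for the final sum. That part is fine.

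The genuine gap is in your bit-complexity analysis, and it stems from a misconception that drives the whole proposal. You declare the naive exponential bound on the integers ``useless'' and claim the ``crux'' of the proof is a polynomial growth lemma bounding $|\cL\indp{2}|$ by something like $O(m^2n^2)$. You never prove this lemma (``one shows that\ldots'' is not an argument), and there is no evident reason it should hold: the braid~$\beta$ is tailored to relax~$\cL\ind{1}$, and applying a positive braid of length $O(n^2m)$ to the unrelated multicurve~$\cL\ind{2}$ can, and generically will, blow its coordinates up exponentially.

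The point you are missing is that the exponential bound is \emph{not} useless. If each generator at most multiplies $|\cL|$ by a constant~$K$, then after a braid of length $O(n^2m)$ the integers are $O(K^{n^2m}m)$ in magnitude --- but such integers have only $O(n^2m)$ \emph{bits}. Addition, subtraction, comparison, max, and min on integers of bit-size $O(n^2m)$ each cost $O(n^2m)$ bit operations. Multiplying the $O(n^2m)$ arithmetic operations by $O(n^2m)$ bit operations per arithmetic operation gives the stated $O(m^2n^4)$ directly. This is exactly what the paper does, in two lines, with no growth lemma at all. Your final paragraph, where you try to manufacture the $m^2n^4$ exponent by ``coarser accounting'' and vague multiplicative factors from Step~6, is groping toward this but never lands on the clean statement: exponential integers, polynomial bit-length, done.
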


\begin{proof} We first observe that the number of arithmetic operations
(addition, subtraction, comparing, taking the maximum, or taking the minimum of
two integers) required for each of the six steps of the the algorithm is
$O(n^2m)$.

\begin{enumerate}[1.]
  \item Extending reduced coordinates using Remark~\ref{rmk:reduced-coords}
  requires $O(n)$ arithmetic operations.

  \item Algorithm~\ref{alg:cumplido} requires $O(n^2m)$ arithmetic operations
  by Theorem~\ref{thm:cumplido-complexity}. We observe that $\cL\ind{1}$, and
  hence $\cL\indp{1}$, has $O(m)$ components (each component must form
  left/right loops around at least two punctures, and hence contributes at
  least~2 to $|\cL|$ by Remark~\ref{rmk:Delta_i}).

  \item Parsing $\cL\indp{1}$ into its $O(m)$ components using
  Algorithm~\ref{alg:parse} requires $O(m+n)$ arithmetic operations.

  \item Each application of a braid generator $\sigma_j$ to $\cL\ind{2}$
  using~\eqref{eq:pos-update} requires $O(1)$ arithmetic operations. Since
  $\beta$ has $O(n^2m)$ generators by Theorem~\ref{thm:cumplido-complexity},
  calculating the Dynnikov coordinates of $\cL\indp{2}$ requires $O(n^2m)$
  arithmetic operations.

  \item Calculating the intersection numbers~$(\alpha\,;\,\beta)$
  using~\eqref{eq:beta} and~\eqref{eq:alpha} requires $O(n)$ arithmetic
  operations.

  \item For each $k$, determining the geometric intersection number
  $\iota(\cL_{i_k, j_k},
  \cL\indp{2})$ using Lemma~\ref{lem:intersect-elementary} requires $O(n)$
  operations. Calculating the sum of~$O(m)$ such therefore requires $O(mn)$
  arithmetic operations.

\end{enumerate}

By~\eqref{eq:pos-update}, there is a constant factor~$K$ such that
$|\sigma_j(\cL)| \le K |\cL|$ for every multicurve $\cL$ and
every~$j$. Therefore the integers involved in each arithmetic operation are
$O(K^{mn^2}m)$. Since each arithmetic operation on such integers has complexity
$O(mn^2)$, the complexity of Algorithm~\ref{alg:int-num} is $O(m^2n^4)$ as
required.

\end{proof}

The algorithm has been implemented as part of the second author's program
\texttt{Dynn}, available at
\url{http://pcwww.liv.ac.uk/maths/tobyhall/software}. Experimentally, it
appears to scale better than $m^2n^4$ for small values of~$n$ (up to~100)
and~$m$ (up to $10^6$): in this range, the time taken goes more like
$m^{1/3}n^{3/2}$.

\bibliographystyle{amsplain}
\bibliography{intersection_refs}
\end{document}